\newcommand{\Nn}{\mathbb{N}}
\newcommand{\Ff}{\mathbb{F}} 
\renewcommand{\epsilon}{\varepsilon}
\renewcommand{\le}{\leqslant}
\renewcommand{\ge}{\geqslant}
\newcommand{\PP}{\mathcal{P}}
\theoremstyle{plain}
\newtheorem{theorem}{Theorem}    
\newtheorem*{theorem*}{Theorem}    
\newtheorem{lemma}[theorem]{Lemma}       
\theoremstyle{remark}
\def\mytilde{\raisebox{-.8ex}{\~~}\hspace{-0.3em}}
\begin{document}

\title[Number of irreducible polynomials]{Number of irreducible polynomials 
in several variables over finite fields}

\author{Arnaud Bodin}
\email{Arnaud.Bodin@math.univ-lille1.fr}

\address{Laboratoire Paul Painlev\'e, Math\'ematiques, Universit\'e 
Lille 1, 59655 Villeneuve d'Ascq Cedex, France}

\date{\today}
 
\begin{abstract}
  We give a formula and an estimation for the number of irreducible
  polynomials in two (or more) variables over a finite field.
\end{abstract}

\maketitle

\section{Introduction}

Let $p$ be a prime number and $n\ge 1$.  For $q=p^n$ we denote by $\Ff_q$ the
finite field having $q$ elements.
The number of polynomials in $\Ff_q[x]$ of degree (exactly) $d$ is $N_1(d) =
q^{d+1}-q^d$. 
 The number $I_1(d)$ of irreducible polynomials of degree $d$
can be explicitly be computed with the help of the Moebius inversion formula and was already known by Gauss, 
see \cite[p.~93]{LN}.  Moreover we have an estimation for the proportion of
irreducible polynomials among all polynomials of degree $d$ (see
\cite[Ex.~26-27, p.~142]{LN}):
$$\frac{I_1(d)}{N_1(d)} \sim \frac 1 d.$$

In particular irreducible polynomials in one variable
 become more and more rare among the set
of polynomials as the degree grows.

\bigskip

Surprisingly the situation is completely different if we look at
irreducibility for polynomials in two (or more) variables.  We will prove that
most of the polynomials of degree $d$ are irreducible and we give an estimate
for this proportion as $d$ grows.

Here is the mathematical formulation : let $N_2(d)$ be the number of polynomials
in $\Ff_q[x,y]$ of degree exactly $d$ and $I_2(d)$ the number of irreducible
polynomials.

\begin{theorem*}
  \label{th:main}
$$ 1- \frac{I_2(d)}{N_2(d)} \sim \frac {q+1}{q^{d}}.$$
\end{theorem*}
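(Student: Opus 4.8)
The plan is to recast the statement in terms of the number of \emph{reducible} polynomials $R(d) := N_2(d) - I_2(d)$ and to show $R(d) \sim \frac{q+1}{q^d} N_2(d)$. First I would record the exact value $N_2(d) = q^{\binom{d+2}{2}} - q^{\binom{d+1}{2}}$, obtained from the fact that polynomials of degree $\le d$ form an $\Ff_q$-vector space of dimension $\binom{d+2}{2}$; in particular $N_2(d) \sim q^{\binom{d+2}{2}}$ and $N_2(d-1)/N_2(d) \sim q^{-(d+1)}$. The heart of the matter is the heuristic that a reducible polynomial factors most cheaply as (linear) $\times$ (degree $d-1$): writing $f = gh$ with $\deg g = e$ and $\deg h = d-e$, the number of pairs has order $q^{\binom{e+2}{2}+\binom{d-e+2}{2}}$, and since $e \mapsto \binom{e+2}{2}+\binom{d-e+2}{2}$ is convex and symmetric about $e = d/2$, it is maximised on $1 \le e \le d-1$ at the endpoint $e=1$. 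So the dominant reducible polynomials are those possessing a linear factor.

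Next I would count $A(d)$, the number of degree-$d$ polynomials admitting a linear factor, sandwiching it between two expressions with the same leading term. Consider the map $(\ell,h) \mapsto \ell h$ from pairs with $\deg \ell = 1$, $\deg h = d-1$ onto $A(d)$. Since $(\lambda \ell, \lambda^{-1} h)$ has the same image for every $\lambda \in \Ff_q^\times$, each $f \in A(d)$ has at least $q-1$ preimages, whence $A(d) \le \frac{N_2(1) N_2(d-1)}{q-1} = q(q+1)\, N_2(d-1)$, using $N_2(1) = q^3 - q$. For the lower bound I would restrict to pairs $(\ell, h)$ with $h$ irreducible of degree $d-1 \ge 2$: such an $f = \ell h$ has exactly one linear factor up to scalars (as $\ell'\mid \ell h$ with $\ell'$ linear forces $\ell' \mid \ell$ when $h$ is irreducible of degree $\ge 2$), so it has exactly $q-1$ preimages, giving $A(d) \ge \frac{N_2(1) I_2(d-1)}{q-1} = q(q+1)\, I_2(d-1)$. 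The constant $q(q+1)$, which is where the factor $q+1$ in the answer comes from, is thus pinned down by the scalar ambiguity.

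It then remains to show that the two sandwich bounds agree asymptotically and that the polynomials with no linear factor are negligible. The reducible polynomials with no linear factor, $B(d)$, are bounded by $\sum_{e=2}^{d-2} N_2(e) N_2(d-e) = O\!\big(d\, q^{6 + \binom{d}{2}}\big)$ by the convexity computation above; comparing exponents, $\binom{d}{2}+6$ falls short of $\binom{d+2}{2}$ by $2d-5$, so $B(d)$ is negligible both against $N_2(d)$ and against the main term. Feeding $A(d) \le q(q+1) N_2(d-1)$ into $R(d) = A(d) + B(d)$ already yields the crude bound $R(d)/N_2(d) \to 0$, i.e. $I_2(d) \sim N_2(d)$; applying this at degree $d-1$ gives $I_2(d-1) \sim N_2(d-1)$, so the lower bound $q(q+1) I_2(d-1)$ and the upper bound $q(q+1) N_2(d-1)$ coincide to leading order. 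Hence $R(d) \sim q(q+1) N_2(d-1)$, and dividing by $N_2(d)$ gives $R(d)/N_2(d) \sim q(q+1) q^{-(d+1)} = \frac{q+1}{q^d}$, as desired. The main obstacle I anticipate is this last bootstrap step combined with the precise bookkeeping of the scalar ambiguity: one must verify that the over- and undercounting in $A(d)$ (polynomials with repeated or several linear factors, or with reducible cofactor) affects only lower-order terms, so that the constant $q(q+1)$ survives intact.
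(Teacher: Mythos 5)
Your argument is correct, and it reaches the theorem by the same core mechanism as the paper --- the dominant reducible polynomials are those of the form (linear)$\times$(irreducible of degree $d-1$), the constant $q(q+1)$ arises from the $q^3-q$ linear polynomials modulo the $(q-1)$-fold scalar ambiguity, and the final step is the same bootstrap (the crude bound $R(d)/N_2(d)\to 0$ applied at degree $d-1$ turns $I_2(d-1)$ back into $N_2(d-1)$ in the lower bound). Where you genuinely diverge is in the combinatorial bookkeeping of the error term. The paper normalizes polynomials, writes an exact formula $R(d)=\sum_k\sum_{[d_1,\dots,d_k]\in\PP(k,d)} I(d_1)\otimes\cdots\otimes I(d_k)$ over all partitions of $d$ (via a ``torsion product'' counting multisets of irreducible factors), bounds every partition other than $[1,d-1]$ by $q^6 N(d-2)$ using submultiplicativity estimates $N(a)N(b)\le q^5 N(a+b-2)$, and then controls the number of terms with the Hardy--Ramanujan bound $P(d)<\exp(\pi\sqrt{2d/3})$. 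You instead split reducibles into ``has a linear factor'' versus ``has none,'' and bound the latter by the sum $\sum_{e=2}^{d-2}N_2(e)N_2(d-e)$ of pair counts, with only $O(d)$ terms handled by a single convexity observation on $e\mapsto\binom{e+2}{2}+\binom{d-e+2}{2}$. Your route is leaner for the asymptotic statement --- no partition enumeration, no partition-counting estimate, and the scalar ambiguity is absorbed by a fiber-counting argument rather than by normalization --- but it yields only the asymptotics; the paper's partition formula is also an exact recursion that computes $I(d)$ (and produces its Table 1), which your decomposition does not give. The two points that genuinely need care in your write-up, and which you correctly flag, are (i) that the fibers of $(\ell,h)\mapsto\ell h$ have size exactly $q-1$ when $h$ is irreducible of degree $\ge 2$ (unique factorization in the UFD $\Ff_q[x,y]$), giving the matching lower bound, and (ii) that $B(d)$ is negligible not just against $N_2(d)$ but against the main term $q(q+1)N_2(d-1)\asymp q^{\binom{d}{2}+d+2}$, which your exponent count $\binom{d}{2}+6$ does establish for $d$ large.
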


In particular it implies that $\frac{I_2(d)}{N_2(d)} \to 1$ as $d \to +\infty$.

For example if $q=2$, the probability to choose an irreducible polynomial
among polynomials of degree $d$ is about $1-\frac {3}{2^d}$.  For $d=10$ we
find:
$$\frac{I_2(10)}{N_2(10)} =
\frac{73534241823793715433}{73750947497819242496}=0.997061\ldots$$ that we
approach by
$$1-\frac {3}{2^{10}} = 0.997070\ldots$$

The fact that in several variables 
almost all polynomials are irreducible is due to L.~Carlitz \cite{Ca1}. This work has been 
expanded to the study of the distribution of irreducible polynomials according not to the degree but to the 
bi-degree (where the \emph{bi-degree} of $P(x,y)$ is $(\deg_{x} P,\deg_{y} P)$)
by Carlitz  himself \cite{Ca2} and by S.~Cohen \cite{Co1} for more variables. 
More arithmetical stuff can be found in \cite{Co2}.
More recently such computations have been applied to algorithms
of factorization of multivariate polynomials, see \cite{Ra2}
and \cite{GL}.

\section{Number of polynomials}
\label{sec:Nd}

We first need to defined what is a normalized polynomial, 
let $f(x,y) \in \Ff_q[x,y]$ be a polynomial of degree exactly $d$ :
$$f(x,y) = \alpha_0x^d + \alpha_1x^dy+\alpha_2x^{d-2}y^2+\cdots + \alpha_dy^d + \text{terms of lower degree}.$$
$f$ is said to be \emph{normalized} if the first non-zero term in the sequence
$(\alpha_0,\alpha_1,\alpha_2,\ldots,\alpha_d)$ is equal to $1$.
Of course any polynomial $g$ can be written $g(x,y) = c \cdot f(x,y)$ 
where $f$ is a normalized polynomial and $c \in \Ff_q^*$.
In particular it implies that the number of normalized polynomials of degree $d$ is
the total number of polynomials of degree $d$ divided by $q-1 = \# \Ff_q^*$. 

The motivation is the following :
we will need to factorize polynomials, but unfortunately this factorization is not unique: 
for example if $g = g_1 \cdot g_2$ is the decomposition of $g \in \Ff_q[x,y]$ into a product of irreducible factors,
then $g = (c g_1) \cdot (c^{-1} g_2)$ is another factorization, for all $c\in \Ff_q^*$. This phenomenon
is problematic when we try to count the number of reducible polynomials.
However, now if $f = f_1 \cdot f_2$ is a factorization with $f, f_1,f_2$ normalized polynomials, then
this decomposition is unique (up to permutation). 

In the sequel of the text we will count normalized polynomials, normalized irreducible polynomials,...
To have the non-normalized results, just multiply by $q-1$. 
\begin{lemma}
  \label{lem:Nd}
  The number of normalized polynomials of degree exactly $d$ in $\Ff_q[x,y]$ is
$$N(d) = \left( \frac{q^{d+1}-1}{q-1} \right) \cdot q^{\frac{d(d+1)}{2}}.$$
\end{lemma}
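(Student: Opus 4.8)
The plan is to count the number of polynomials of degree exactly $d$ in two variables by first counting all monomials $x^iy^j$ with $i+j \le d$, then applying the normalization correction factor of $\frac{1}{q-1}$ described above.

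Let me lay out the proof.

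---

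First I would count the total number of polynomials of degree exactly $d$ in $\Ff_q[x,y]$, without the normalization requirement. A polynomial is determined by freely choosing a coefficient in $\Ff_q$ for each monomial $x^iy^j$ with $i+j \le d$, subject to the single constraint that at least one of the top-degree monomials (those with $i+j=d$) has a nonzero coefficient (so that the degree is \emph{exactly} $d$). The key combinatorial input is that the number of monomials $x^iy^j$ with $i+j \le d$ equals $\binom{d+2}{2} = \frac{(d+1)(d+2)}{2}$, and the number of top-degree monomials (with $i+j = d$) is exactly $d+1$. Splitting the coefficient vector into its top-degree part and its lower-degree part, the number of lower-degree monomials is $\frac{(d+1)(d+2)}{2} - (d+1) = \frac{d(d+1)}{2}$.

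With this bookkeeping in place, the count of non-normalized polynomials of degree exactly $d$ is
$$N_2(d) = \left(q^{d+1}-1\right)\cdot q^{\frac{d(d+1)}{2}},$$
where the factor $q^{d+1}-1$ counts the choices of the $d+1$ top-degree coefficients excluding the all-zero choice, and the factor $q^{d(d+1)/2}$ counts the free choices of the remaining lower-degree coefficients. Dividing by $q-1$ as explained in the paragraph preceding the lemma gives the number of normalized polynomials,
$$N(d) = \frac{q^{d+1}-1}{q-1}\cdot q^{\frac{d(d+1)}{2}},$$
which is the claimed formula.

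The only delicate point is the arithmetic of the monomial count, i.e.\ verifying that exactly $d+1$ monomials have total degree $d$ and that exactly $\frac{d(d+1)}{2}$ monomials have total degree strictly less than $d$; both follow from the standard count $\#\{(i,j) : i+j = k\} = k+1$ summed over $k$. I expect no genuine obstacle here: the result is a direct enumeration, and the main thing to get right is ensuring that the ``exactly $d$'' condition is encoded by the $q^{d+1}-1$ factor (excluding only the simultaneous vanishing of all top-degree coefficients) rather than by some more complicated inclusion--exclusion. The passage from $N_2(d)$ to the normalized count $N(d)$ is then immediate from the division-by-$(q-1)$ principle already established.
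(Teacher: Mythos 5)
Your proof is correct and follows essentially the same route as the paper: decompose a degree-$d$ polynomial into its top-degree homogeneous part (counted by $q^{d+1}-1$) and its lower-degree part (counted by $q^{\frac{d(d+1)}{2}}$), then divide by $q-1$ to pass to normalized polynomials. The only cosmetic difference is that the paper packages the lower-degree count as $N'(d-1)=q^{\frac{d(d+1)}{2}}$, whereas you compute it directly from the monomial count.
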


For example $N(1) = q(q+1)$, $N(2) = \frac{q^6-q^3}{q-1}$.

\begin{proof}
The number of monomials of degree lower or equal to $d$ is
$\frac{(d+1)(d+2)}{2}$, hence the number of polynomials of degree less or
equal than $d$ is
$$N'(d) = q^\frac{(d+1)(d+2)}{2}.$$

The number of non-zero homogeneous polynomials of degree $d$ is
$$ q^{d+1}-1.$$
A polynomial of degree exactly $d$ is the sum of a non-zero homogeneous
polynomial of degree $d$ with a polynomials of degree $< d$.  Hence the number
of polynomials of degree exactly $d$ is:
$$\left( q^{d+1}-1 \right) \cdot N'(d-1).$$
To get the number of normalized polynomials we divide by $q-1$ and obtain:
$$N(d) = \left( \frac{q^{d+1}-1}{q-1} \right) \cdot N'(d-1) = \left( \frac{q^{d+1}-1}{q-1} \right) \cdot q^{\frac{d(d+1)}{2}}.$$
\end{proof}

The gap between two consecutive numbers is given by the following lemma.
\begin{lemma}
  \label{lem:Ndquot}
  \
  \begin{itemize}
  \item $\displaystyle{\frac{N(d)}{N(d+1)}=\frac{1}{q^{d+2}} \cdot \left( 1 -
        \frac{q-1}{q^{d+2}-1}\right).}$
  \item In particular $\displaystyle{\frac{N(d)}{N(d+1)} \sim
      \frac{1}{q^{d+2}}}$.
  \end{itemize}
\end{lemma}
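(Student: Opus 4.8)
The plan is to substitute the closed form from Lemma~\ref{lem:Nd} and simplify; this is a purely computational statement with no real obstacle, so the work is just to organize the cancellation cleanly and then reshape the result into the specific form requested.

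First I would write down both quantities side by side,
$$N(d) = \frac{q^{d+1}-1}{q-1}\cdot q^{\frac{d(d+1)}{2}}, \qquad N(d+1) = \frac{q^{d+2}-1}{q-1}\cdot q^{\frac{(d+1)(d+2)}{2}},$$
and form the quotient. The two factors $\frac{1}{q-1}$ cancel, and the powers of $q$ combine into a single exponent
$$\frac{d(d+1)}{2} - \frac{(d+1)(d+2)}{2} = \frac{(d+1)\bigl(d-(d+2)\bigr)}{2} = -(d+1),$$
so that
$$\frac{N(d)}{N(d+1)} = \frac{q^{d+1}-1}{q^{d+2}-1}\cdot q^{-(d+1)} = \frac{q^{d+1}-1}{q^{d+1}\,(q^{d+2}-1)}.$$

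It then remains to recognize this as the stated expression. The one step requiring a moment's attention is the factorization $q^{d+2}-q = q\,(q^{d+1}-1)$, which is what brings the two forms into alignment. Working from the claimed right-hand side,
$$\frac{1}{q^{d+2}}\left(1 - \frac{q-1}{q^{d+2}-1}\right) = \frac{1}{q^{d+2}}\cdot\frac{q^{d+2}-q}{q^{d+2}-1} = \frac{q\,(q^{d+1}-1)}{q^{d+2}\,(q^{d+2}-1)} = \frac{q^{d+1}-1}{q^{d+1}\,(q^{d+2}-1)},$$
which matches the expression derived above, establishing the first bullet.

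For the second bullet I would simply observe that the parenthetical factor $1 - \frac{q-1}{q^{d+2}-1}$ tends to $1$ as $d \to +\infty$, whence $\frac{N(d)}{N(d+1)} \sim \frac{1}{q^{d+2}}$ follows at once. In short, the entire argument is bookkeeping; the only non-mechanical insight is spotting the factorization $q^{d+2}-q = q(q^{d+1}-1)$ that reconciles the two shapes.
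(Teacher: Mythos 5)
Your computation is correct and is exactly the routine verification the paper leaves implicit (it states Lemma~\ref{lem:Ndquot} without proof as an immediate consequence of Lemma~\ref{lem:Nd}). Both the algebraic identity and the asymptotic statement check out.
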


We will need an upper bound for the product $N(a) \cdot N(b)$.
\begin{lemma}
  \label{lem:majmul}
  \
  \begin{enumerate}
  \item\label{it:majmul0} $N(a) \cdot N(b) \le N(a+b)$ for all $a\ge 1$, $b\ge
    1$;
  \item\label{it:majmul1} $N(a) \cdot N(b) \le q^3 \cdot N(a+b-1)$ for all
    $a\ge 1$, $b\ge 1$;
  \item\label{it:majmul2}$N(a) \cdot N(b) \le q^5 \cdot N(a+b-2)$ for all
    $a\ge 3$, $b\ge 3$;
  \end{enumerate}
\end{lemma}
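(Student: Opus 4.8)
The plan is to separate $N(d)$ into a ``rational part'' and an ``exponential part'' and to estimate each independently. Write
$$N(d) = A(d)\cdot E(d), \qquad A(d) = \frac{q^{d+1}-1}{q-1} = 1+q+\cdots+q^{d}, \qquad E(d)=q^{\frac{d(d+1)}{2}}.$$
All three statements have the common shape $N(a)N(b)\le q^{\,k+1}\,N(a+b-k)$ with $k=0,1,2$, so I would prove the single identity
$$\frac{N(a)N(b)}{N(a+b-k)}=\frac{A(a)A(b)}{A(a+b-k)}\cdot\frac{E(a)E(b)}{E(a+b-k)}$$
and bound the two factors on the right separately.

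For the exponential factor I would use the elementary identities $\frac{a(a+1)}{2}+\frac{b(b+1)}{2}=\frac{(a+b)(a+b+1)}{2}-ab$ and $\frac{(a+b)(a+b+1)}{2}-\frac{(a+b-k)(a+b-k+1)}{2}=k(a+b)-\frac{k(k-1)}{2}$, which give the exact value
$$\frac{E(a)E(b)}{E(a+b-k)}=q^{-ab+k(a+b)-\frac{k(k-1)}{2}}.$$
The exponent rewrites as $-ab$ for $k=0$, as $1-(a-1)(b-1)$ for $k=1$, and as $3-(a-2)(b-2)$ for $k=2$. The point of this rewriting is that the ``defect'' terms are nonnegative, $(a-1)(b-1)\ge 0$, and become $\ge 1$ precisely when $a,b\ge 3$, since $(a-2)(b-2)\ge 1$ there; this is exactly what forces the hypothesis $a,b\ge 3$ in statement~(3).

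For the rational factor I would establish the uniform bound $A(a)A(b)\le q^{\,k+1}A(a+b-k)$. Clearing the denominators $(q-1)^2$ turns this into the polynomial inequality
$$q^{a+b+2}(q-2)+q^{a+1}+q^{b+1}\ge q^{k+1}(q-1)+1,$$
which I would verify by splitting on $q$: for $q\ge 3$ the single term $q^{a+b+2}(q-2)$ already dominates the right-hand side, while for $q=2$ this term vanishes and one is left with the trivial check $2^{a+1}+2^{b+1}\ge 2^{k+1}+1$ under the relevant constraints on $a,b$. This boundary regime ($q=2$, small $a,b$) is the only genuinely delicate point of the whole argument, and it is exactly where the constants must be watched carefully.

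Assembling the two bounds gives $\frac{N(a)N(b)}{N(a+b-k)}\le q^{(k+1)-ab+k(a+b)-k(k-1)/2}$, that is $q^{1-ab}$ for $k=0$, $q^{3-(a-1)(b-1)}$ for $k=1$, and $q^{6-(a-2)(b-2)}$ for $k=2$. Using $ab\ge 1$, $(a-1)(b-1)\ge 0$, and $(a-2)(b-2)\ge 1$ (the last one requiring $a,b\ge 3$), these are bounded respectively by $q^{0}=1$, $q^{3}$, and $q^{5}$, which are the three claimed inequalities. As a sanity check on the constants one can note that for $a=b=2$ the defect $(a-2)(b-2)$ vanishes, producing the exponent $q^{6}$ rather than $q^{5}$; indeed $N(2)^{2}>q^{5}N(2)$ already for $q=2$, so the restriction to $a,b\ge 3$ in~(3) cannot be relaxed.
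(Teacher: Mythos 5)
Your proof is correct and follows essentially the same route as the paper's: both decompose $N(d)$ into the geometric-sum factor $\frac{q^{d+1}-1}{q-1}$ and the power $q^{\frac{d(d+1)}{2}}$, compute the exponential ratio exactly (yielding $q^{-ab}$, $q^{1-(a-1)(b-1)}$, $q^{3-(a-2)(b-2)}$), and bound the remaining rational factor by a small power of $q$. The paper only writes out case (1) explicitly and dismisses the others with ``similar calculus holds,'' so your fully worked-out version --- including the observation that $(a-2)(b-2)\ge 1$ is precisely where the hypothesis $a,b\ge 3$ enters, and the check that $a=b=2$ genuinely fails --- is if anything more complete than the original.
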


\begin{proof}
  First of all the function defined by $M(d) = q^{d+1}-1$ verifies $M(a) \cdot
  M(b) \le q M(a+b)$ for all $a \ge 1$, $b \ge 1$.  Then
  \begin{align*}
    \frac{N(a)\cdot N(b)}{N(a+b)}
    &=   \frac{M(a)\cdot M(b)}{M(a+b)} \cdot \frac{1}{q-1} \cdot q^{\frac{a(a+1)+b(b+1)-(a+b)(a+b+1)}{2}} \\
    &\le \frac{1}{q-1} \cdot q \cdot q^{-ab} = \frac{1}{q-1} \cdot q^{-ab+1} \\
    &\le \frac{1}{q-1} \le 1. \\
  \end{align*}
  Similar calculus holds for the other bounds.
\end{proof}

\section{A formula to compute the number of irreducible polynomials}

\subsection{Notations}

We denote by $I(d)$ the number of normalized irreducible polynomials of degree exactly
$d$ and by $R(d)$ the number of normalized reducible polynomials of degree exactly $d$.
Of course we have:
$$N(d) = I(d)+R(d).$$

We will decompose the set of polynomials according to the number of
irreducible factors.  Let $S_k(d)$ be the number of normalized polynomials of degree
exactly $d$ having exactly $k$ irreducible (maybe non-distinct) factors.  Of
course
$$S_1(d) = I(d)$$
and
$$S_2(d)+\cdots+S_d(d) = R(d).$$

\subsection{Torsion product}
\label{ssec:torsion}

Let $(\ell_1,\ldots,\ell_k) \in \Nn^k$ such that

$$\underbrace{\ell_{i_1}= \cdots = \ell_{i_1+\alpha_1-1}}_{\alpha_1} <
\underbrace{\ell_{i_2}=\cdots = \ell_{i_2+\alpha_2-1}}_{\alpha_2} < \ldots <
\underbrace{\ell_{i_r}= \cdots = \ell_{k}}_{\alpha_r} $$ where $i_1=1$.

We define the following product:
$$\ell_1 \otimes \ell_2 \otimes \cdots \otimes \ell_k =
\binom{\ell_{i_1}+\alpha_1-1}{\alpha_1} \times
\binom{\ell_{i_2}+\alpha_2-1}{\alpha_2} \times \cdots \times
\binom{\ell_{i_r}+\alpha_r-1}{\alpha_r}.$$

In another language this is number of ways to choose $k$ objects from $k$ boxes (combination with
repetition), where the $i$-th box contains $\ell_i$ objects.  Moreover if
$\ell_i=\ell_j$ then boxes $i$ and $j$ contain the same objects and if
$\ell_i \neq \ell_j$ they contain no common objects.

Let us remark that:
$$\ell_1 \otimes  \cdots \otimes \ell_k  \le \ell_1 \times \cdots \times
\ell_k.$$

\subsection{Partitions}
\label{ssec:partition}

Let $\PP(k,d)$ be the set of partitions of $d$ into exactly $k$ parts:
$$\PP(k,d) = \big\lbrace [d_1,d_2,\ldots,d_k] \mid  1 \le  d_1 \le d_2 \cdots \le
d_k \text{ and } d_1+d_2+\cdots + d_k = d \big\rbrace.$$

Then the set of partitions of $d$ is:
$$\PP(d) = \PP(1,d) \cup \PP(2,d) \cup \ldots \cup \PP(d,d).$$

For example if $d=5$ we have:
$ 5 = 1+4 = 2+3 = 1+1+3 = 1+2+2 = 1+1+1+2=1+1+1+1+1$.
Then $$\PP(5)= \big\lbrace [5], [1,4], [2,3], [1,1,3], [1,2,2], [1,1,1,2], [1,1,1,1,1]  \big\rbrace.$$

Let $P(d)= \# \PP(d)$, the asymptotic behaviour of $P(d)$ is given by a
formula of Hardy and Ramanujan:
$$P(d) \sim \frac{1}{4d \sqrt 3}\exp\left(\pi \sqrt{\frac{2d}{3}}\right).$$

We will need an upper bound, \cite[p.~197]{Ay}, for all $d\ge 1$:
$$P(d) < \exp\left(\pi \sqrt{\frac{2d}{3}}\right).$$

\subsection{Formula}

\begin{lemma}
  \label{lem:sum}
$$S_k(d) = \sum_{[d_1,\ldots,d_k] \in \PP(k,d)}  I(d_1) \otimes I(d_2) \otimes  \cdots \otimes I(d_k).$$
\end{lemma}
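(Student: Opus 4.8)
The plan is to exhibit a degree-preserving correspondence between normalized polynomials of degree $d$ with exactly $k$ irreducible factors and multisets of $k$ normalized irreducible polynomials whose degrees sum to $d$, and then to count these multisets by grouping them according to the multiset of degrees that appears.

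First I would invoke the unique factorization established in Section~\ref{sec:Nd}: every normalized polynomial $f$ of degree $d$ with exactly $k$ irreducible factors can be written, uniquely up to permutation, as $f = f_1 \cdots f_k$ with each $f_i$ a normalized irreducible polynomial. Since the degree is additive on products, the multiset $\{\deg f_1, \ldots, \deg f_k\}$, sorted in non-decreasing order, is a partition $[d_1, \ldots, d_k] \in \PP(k,d)$ (each $\deg f_i \ge 1$ and their sum is $d$). Partitioning the set counted by $S_k(d)$ according to this degree pattern gives
$$S_k(d) = \sum_{[d_1,\ldots,d_k]\in\PP(k,d)} T(d_1,\ldots,d_k),$$
where $T(d_1,\ldots,d_k)$ denotes the number of multisets of $k$ normalized irreducible polynomials realizing exactly these degrees.

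Next I would compute $T(d_1,\ldots,d_k)$. Writing the distinct degrees as $v_1 < \cdots < v_r$ with respective multiplicities $\alpha_1,\ldots,\alpha_r$ (so $\alpha_1+\cdots+\alpha_r = k$), the choices at distinct degrees are independent, because irreducible polynomials of different degrees are automatically distinct. For each $v_j$ one must pick a size-$\alpha_j$ multiset from the $I(v_j)$ normalized irreducibles of degree $v_j$, which can be done in $\binom{I(v_j)+\alpha_j-1}{\alpha_j}$ ways (combinations with repetition). Hence $T(d_1,\ldots,d_k) = \prod_{j=1}^r \binom{I(v_j)+\alpha_j-1}{\alpha_j}$, and I would identify this product with the torsion product $I(d_1)\otimes\cdots\otimes I(d_k)$: taking $\ell_i = I(d_i)$ with the $d_i$ sorted, the blocks of equal $\ell_i$ coincide with the blocks of equal $d_i$, so the defining formula of the torsion product returns exactly $\prod_j \binom{I(v_j)+\alpha_j-1}{\alpha_j}$, which finishes the proof.

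The main obstacle is precisely this last identification. The torsion product groups its arguments by equal \emph{values} $\ell_i = I(d_i)$, whereas the combinatorial count groups by equal \emph{degrees} $d_i$; these two groupings agree only if distinct degrees produce distinct counts. This holds here because $I(d)$ is strictly increasing in $d$, so I would either record this monotonicity or, more cleanly, read the torsion product through the boxes-interpretation given after its definition, taking box $i$ to be the set of normalized irreducibles of degree $d_i$ (equal degrees sharing all objects, distinct degrees sharing none). In that reading the torsion product counts the relevant multisets by construction, and the argument no longer depends on the numerical coincidences of the values $I(d_i)$.
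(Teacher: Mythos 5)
Your proof is correct and follows essentially the same route as the paper's: unique factorization into normalized irreducible factors, grouping the polynomials by the degree partition $[d_1,\ldots,d_k]$, and counting the multisets realizing a given partition by combinations with repetition. Your closing observation --- that the torsion product as literally defined groups its arguments by equal \emph{values} $I(d_i)$ rather than equal \emph{degrees} $d_i$, so that one needs either the boxes interpretation or the strict monotonicity of $I$ to make the identification --- is a genuine subtlety that the paper's one-line proof passes over silently.
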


Note that if $k\ge 2$ then all $d_i$ that appear in this formula verify $d_i
< d$.

\begin{proof}
  In fact a normalized polynomial $f$ of degree $d$ with exactly $k$ irreducible factors
  can be written $f= f_1\times \cdots \times f_k$. This decomposition is unique (up to permutation)
  if we choose the $f_i$ to be irreducible and normalized. If we denote by $d_i$ the
  degree of $f_i$ we have $d_1+\cdots+d_k = d$.  Then to a factorization we
  associate a partition $[d_1,\ldots,d_k]$ of $d$.  And the number of
  polynomials having this partition is exactly $I(d_1) \otimes \cdots \otimes
  I(d_k)$.
\end{proof}

\subsection{Algorithm}

Lemma \ref{lem:sum} provides an algorithm to compute $I(d)$ recursively.

\begin{itemize}
\item Compute $I(1)$ by hand: $I(1)=N(1) = q(q+1)$.
\item Assume that you have already computed $I(2),\ldots, I(d-1)$.
\item Calculate the sets of partitions $\PP(k,d)$, $2 \le k \le d$.
\item Apply the recursive formula
  \begin{align*}
    I(d) &= N(d) - R(d) = N(d) - \sum_{k=2}^d S_k(d) \\
    &= N(d) - \sum_{k=2}^d\sum_{[d_1,\ldots,d_k] \in \PP(k,d)} I(d_1)
    \otimes I(d_2) \otimes  \cdots \otimes I(d_k).\\
  \end{align*}
\end{itemize}

\begin{table}
  \begin{tabular}{c|c|c|c|c}
    $d$   &   $N(d)$  & $I(d)$ & $\frac{I(d)}{N(d)}$  & $1-\frac{3}{2^d}$ \\
    \hline
    1&  6 & 6 & 1 &  -0.5 \\
    2& 56 & 35 & 0.625 & 0.25  \\
    3&  960 & 694 & 0.72291\ldots & 0.625 \\
    4& 31744 & 26089 & 0.82185\ldots &  0.8125 \\
    5& 2064384 & 1862994 & 0.90244\ldots & 0.90625 \\
    6& 266338304 & 253247715 & 0.95084\ldots &  0.95312\ldots\\
    7& 68451041280 & 66799608630 & 0.97587\ldots &  0.97656\ldots\\ 
    8& {\tiny 35115652612096} &  {\tiny 34698378752226} & 0.98811\ldots  & 0.98828\ldots \\
    9& {\tiny 35993612646875136} &  {\tiny 35781375988234520} & 0.99410\ldots  &  0.99414\ldots\\
    10& {\tiny 73750947497819242496} & {\tiny 73534241823793715433} & 0.99706\ldots  &  0.99707\ldots\\
  \end{tabular}
  \ 

\caption{Number of irreducible polynomials in $\Ff_2[x,y]$.
  \label{tab:Id}}
\end{table}

Contrary to the one variable case it appears in Table \ref{tab:Id} that the
probability to choose an irreducible polynomials among polynomials of degree
$d$ tends to $1$ as $d$ tends to infinity.  Moreover the speed of this
convergence seems to be given by the formula of the introduction.

Some of these numbers appears in Sloane's \emph{Encyclopedia of Integer Sequences} \cite{Sl}, for example
the sequence $(I(d))_d = (6,35,694,\ldots)$ that gives the number of irreducible polynomials in $\Ff_2[x,y]$
is referenced as \emph{A115457}. This algorithm is implemented 
(in any number of variables and in any field) in a Maple sheet available on author's web page \cite{Bo}.

\section{Asymptotic value for the number of irreducible polynomials}

\begin{lemma}
\label{lem:majoration}
For a partition $[d_1,d_2,\ldots,d_k]\in \PP(k,d)$ not equal to $[1,d-1]$ we
have
$$I(d_1)\otimes I(d_2)\otimes \cdots \otimes I(d_k) \le q^6 \cdot N(d-2).$$
\end{lemma}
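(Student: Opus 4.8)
The plan is to discard the combinatorial refinement carried by the torsion product, bound each irreducible count by the total count, and then collapse the resulting product of $N(d_i)$ into a single value $N(d-2)$ by means of Lemma~\ref{lem:majmul}, watching that the accumulated multiplicative constant stays at most $q^6$. Concretely, using the inequality $\ell_1\otimes\cdots\otimes\ell_k\le\ell_1\times\cdots\times\ell_k$ recalled in Section~\ref{ssec:torsion} together with $I(m)\le N(m)$ (normalized irreducibles forming a subset of all normalized polynomials of degree $m$), it suffices to prove
$$N(d_1)\cdots N(d_k)\le q^6\,N(d-2).$$
Since the lemma serves to estimate the reducible part $R(d)=\sum_{k\ge2}S_k(d)$, I treat partitions with $k\ge2$ parts, and $[1,d-1]$ is then the unique excluded two-part partition.

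The guiding principle is a cost count. Lemma~\ref{lem:majmul}(\ref{it:majmul0}) merges two factors, $N(a)N(b)\le N(a+b)$, for free but preserves the degree sum $d$, whereas the target $N(d-2)$ has a strictly smaller argument; part~(\ref{it:majmul1}) lowers the argument by one at cost $q^3$, and part~(\ref{it:majmul2}) lowers it by two at cost $q^5$. As the budget is $q^6$, I am allowed to shed exactly two units of degree. When $k\ge3$ I would spend it by applying part~(\ref{it:majmul1}) twice: combine $d_1,d_2$ into $q^3N(d_1+d_2-1)$, then combine the result with $d_3$ into $q^6N(d_1+d_2+d_3-2)$ — both applications are licit because every argument involved is at least $1$ — and finally absorb $d_4,\dots,d_k$ through part~(\ref{it:majmul0}) at no extra cost, landing on $q^6N(d-2)$.

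The case $k=2$ is the delicate one, since with only two factors I cannot invoke part~(\ref{it:majmul1}) twice. Writing $[d_1,d_2]$ with $d_1\le d_2$ and excluding $[1,d-1]$ forces $d_1\ge2$. If $d_1\ge3$ then $d_2\ge3$ as well, and part~(\ref{it:majmul2}) gives directly $N(d_1)N(d_2)\le q^5N(d-2)\le q^6N(d-2)$. If $d_1=2$ I would argue by hand: the desired $N(2)N(d-2)\le q^6N(d-2)$ reduces to $N(2)\le q^6$, and since $N(2)=q^3(q^2+q+1)$ this follows from $q^2+q+1\le q^3$ for $q\ge2$.

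The main obstacle, and the explanation for excluding $[1,d-1]$, lies precisely here: $[1,d-1]$ is the only two-part partition whose smallest part equals $1$, so part~(\ref{it:majmul2}) is unavailable (it needs both parts $\ge3$) and no direct estimate can rescue it — indeed $N(1)N(d-1)\sim q^{d+2}N(d-2)$ by Lemma~\ref{lem:Ndquot}, which dwarfs $q^6N(d-2)$ for large $d$, so the stated inequality genuinely fails for it. The remaining work is routine bookkeeping: verifying that every admissible partition falls into one of the three tractable patterns above and that the small-degree boundary cases (such as $[2,2]$ at $d=4$, where $N(2)^2\le q^6N(2)$) are indeed covered.
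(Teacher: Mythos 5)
Your proposal is correct and follows essentially the same route as the paper: reduce via $\otimes \le \times$ and $I \le N$ to bounding $N(d_1)\cdots N(d_k)$, handle $[2,d-2]$ by the direct estimate $N(2)=\frac{q^6-q^3}{q-1}\le q^6$, handle $[a,d-a]$ with $a\ge 3$ by Lemma~\ref{lem:majmul}-(\ref{it:majmul2}), and handle $k\ge 3$ by two applications of Lemma~\ref{lem:majmul}-(\ref{it:majmul1}) followed by Lemma~\ref{lem:majmul}-(\ref{it:majmul0}). Your explicit restriction to $k\ge 2$ and your remark on why $[1,d-1]$ must be excluded are sensible clarifications of points the paper leaves implicit.
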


\begin{proof}
  First remember from Section \ref{ssec:torsion} that $I(d_1)\otimes \cdots
  \otimes I(d_k) \le I(d_1) \times \cdots \times I(d_k) \le N(d_1) \times
  \cdots  \times N(d_k)$.

  For the partition $[2,d-2]$ it gives
$$I(2)\otimes I(d-2) \le N(2) \cdot N(d-2) \le \frac{q^6-q^3}{q-1}\cdot N(d-2) \le q^6 \cdot N(d-2).$$

For a partition of type $[a,d-a]$, $a\ge 3$ then using Lemma
\ref{lem:majmul}-(\ref{it:majmul2}) it gives
$$I(a)\otimes I(d-a) \le N(a) \cdot N(d-a) \le q^5 \cdot N(d-2).$$

For a partition of type $[d_1,\ldots,d_k]$ with $k\ge 3$, we apply twice Lemma
\ref{lem:majmul}-(\ref{it:majmul1}) and finish using Lemma
\ref{lem:majmul}-(\ref{it:majmul0}):
\begin{align*}
  I(d_1)\otimes \cdots \otimes I(d_k)
  &\le  N(d_1) \times N(d_2) \times N(d_3) \times \cdots \times N(d_k) \\
  &\le q^3 \cdot N(d_1+d_2-1) \cdot N(d_3)  \times \cdots \times N(d_k) \\
  &\le q^3\cdot q^3 \cdot N(d_1+d_2+d_3-2) \cdot N(d_4) \times \cdots \times
  N(d_k)
  \\
  &\le q^6 \cdot N(d_1+d_2+d_3-2+d_4+\cdots +d_k) \\
  &\le q^6 \cdot N(d-2).\\
\end{align*}

\end{proof}

Lemma \ref{lem:majoration} above would enable us to prove that among reducible polynomials those 
associated to the partition $[1,d-1]$ in number $I(1)\otimes I(d-1)$ are predominant. 
This is the main idea for the proof of the next Lemma. 

\begin{lemma}
  \label{lem:Sd}
  There exists $d_0 \ge 1$ such that for all $d \ge d_0$ we have
$$1-\frac 1d \le \frac{R(d)}{N(1) \cdot N(d-1)} \le 1+\frac 1d.$$
\end{lemma}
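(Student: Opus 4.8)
The plan is to show that $R(d)$ is dominated by the single partition $[1,d-1]$, whose contribution is exactly $I(1)\otimes I(d-1)=N(1)\cdot N(d-1)$ (since the parts $1$ and $d-1$ are distinct for $d\geq 3$, the torsion product reduces to the ordinary product $I(1)\cdot I(d-1)$, and $I(1)=N(1)$). All other partitions together contribute a lower-order term, so their ratio to $N(1)\cdot N(d-1)$ tends to $0$, which will force the double inequality once $d$ is large enough.

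First I would write $R(d)=\sum_{k=2}^d S_k(d)$ as the sum over all partitions in $\PP(2,d)\cup\cdots\cup\PP(d,d)$, and split off the term indexed by $[1,d-1]$. The remaining sum is over all partitions of $d$ into at least two parts other than $[1,d-1]$; by Lemma~\ref{lem:majoration} each such term is at most $q^6\cdot N(d-2)$. Since the total number of partitions of $d$ is $P(d)$, the error satisfies
\begin{equation*}
0 \le R(d)-N(1)\cdot N(d-1) \le P(d)\cdot q^6\cdot N(d-2).
\end{equation*}
Dividing through by $N(1)\cdot N(d-1)$ gives
\begin{equation*}
\left| \frac{R(d)}{N(1)\cdot N(d-1)} - 1 \right| \le \frac{q^6\,P(d)}{N(1)}\cdot\frac{N(d-2)}{N(d-1)}.
\end{equation*}

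The heart of the estimate is then to control $\frac{N(d-2)}{N(d-1)}$, for which I would invoke Lemma~\ref{lem:Ndquot}: this ratio is asymptotic to $q^{-(d+1)}$, an exponentially small quantity. Against this, the partition count $P(d)<\exp\!\big(\pi\sqrt{2d/3}\big)$ grows only sub-exponentially (like $e^{c\sqrt d}$). Therefore the product $P(d)\cdot\frac{N(d-2)}{N(d-1)}$ behaves like $\exp\!\big(\pi\sqrt{2d/3}-(d+1)\log q\big)\to 0$, since the linear term $-(d+1)\log q$ dominates the $\sqrt d$ term. Thus the whole right-hand side tends to $0$, and in particular is eventually bounded by $1/d$.

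The main obstacle is making the final step effective: I must exhibit an explicit $d_0$ beyond which $\frac{q^6\,P(d)}{N(1)}\cdot\frac{N(d-2)}{N(d-1)}\le \frac 1d$, rather than merely observing the limit is $0$. Concretely, using the exact formula from Lemma~\ref{lem:Ndquot} I would bound $\frac{N(d-2)}{N(d-1)}\le \frac{1}{q^{d+1}}$ and $N(1)=q(q+1)$, reducing the claim to the inequality
\begin{equation*}
\frac{d\,q^6}{q(q+1)}\cdot\frac{1}{q^{d+1}}\cdot \exp\!\left(\pi\sqrt{\tfrac{2d}{3}}\right)\le 1.
\end{equation*}
Taking logarithms turns this into $\log d + C - (d+1)\log q + \pi\sqrt{2d/3}\le 0$ for a constant $C$ depending only on $q$; since $(d+1)\log q$ eventually outpaces $\log d + \pi\sqrt{2d/3}$, such a $d_0$ exists. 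I would note that the worst case is $q=2$, where $\log q$ is smallest, so verifying the inequality there (or simply asserting existence of $d_0$ without an explicit value, as the statement only claims existence) suffices.
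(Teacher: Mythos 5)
Your upper bound is sound and follows the paper's route exactly: isolate the partition $[1,d-1]$, bound every other partition's contribution by $q^6 N(d-2)$ via Lemma~\ref{lem:majoration}, multiply by $P(d) < \exp(\pi\sqrt{2d/3})$, and observe that $N(d-2)/N(d-1)$ decays like $q^{-d}$ (note: Lemma~\ref{lem:Ndquot} gives $q^{-d}$ here, not $q^{-(d+1)}$ as you wrote, though this does not affect the conclusion).

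The lower bound, however, has a genuine gap. You assert that the contribution of the partition $[1,d-1]$ is $I(1)\otimes I(d-1) = N(1)\cdot N(d-1)$. It is not: the torsion product reduces to $I(1)\cdot I(d-1) = N(1)\cdot I(d-1)$, and $I(d-1)$ is the number of \emph{irreducible} normalized polynomials of degree $d-1$, which is strictly less than $N(d-1)$. Consequently your claimed inequality $0 \le R(d) - N(1)\cdot N(d-1)$ does not follow from splitting off this term, and it is in fact false for small $d$ (for $q=2$, Table~\ref{tab:Id} gives $R(3)=266 < 336 = N(1)N(2)$ and $R(4)=5655 < 5760 = N(1)N(3)$). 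What you actually get is $R(d) \ge N(1)\cdot I(d-1)$, and turning this into $R(d) \ge (1-\tfrac 1d)\,N(1)\cdot N(d-1)$ requires showing that $I(d-1)/N(d-1)$ is close to $1$ --- which is essentially the content of the lemma itself, one degree down. The paper closes this circle by a bootstrap: it first establishes the upper bound $(*)$ for all $d \ge d_0'$, then writes $I(d-1) = N(d-1) - R(d-1)$ and applies $(*)$ at $d-1$ to bound $R(d-1) \le N(1)(1+\tfrac{1}{d-1})N(d-2)$, whence $R(d) \ge N(1)N(d-1)\bigl(1 - N(1)(1+\tfrac{1}{d-1})\tfrac{N(d-2)}{N(d-1)}\bigr) \ge N(1)N(d-1)(1-\tfrac 1d)$ for $d$ large. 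Your proof needs this extra step; as written, the two-sided estimate does not follow.
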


\begin{proof}

  \emph{Upper bound.}

  $ R(d) = S_2(d)+\cdots + S_d(d)$ and each $S_k(d)$ is the sum of
  $I(d_1)\otimes \cdots \otimes I(d_k)$ over all partition $[d_1,\ldots,d_k]
  \in \PP(k,d)$.  By Lemma \ref{lem:majoration} and putting apart the
  partition $[1,d-1]$ we get that $I(d_1)\otimes \cdots \otimes I(d_k) \le q^6
  \cdot N(d-2)$.

  We recall that $P(d)$ is number of partition of $d$: $P(d) = \# \PP(d) = \#
  (\PP(1,d) \cup \ldots \cup \PP(k,d))$, see Section \ref{ssec:partition}.
  Then
  \begin{align*}
    R(d) &\le I(1)\otimes I(d-1) + P(d) \cdot q^6 \cdot N(d-2) \\
    &\le N(1) \cdot N(d-1) +  \exp\left(\pi \sqrt{\frac{2d}{3}}\right) \cdot q^6 \cdot  N(d-2).\\
  \end{align*}
  Then
  \begin{align*}
    \frac{R(d)}{N(1) \cdot N(d-1)} &\le 1+ \frac{q^6}{N(1)} \cdot \exp\left(\pi
      \sqrt{\frac{2d}{3}}\right) \cdot
    \frac{N(d-2)}{N(d-1)} \\
    &\le 1 + \frac{q^6}{q(q+1)} \cdot \exp\left(\pi \sqrt{\frac{2d}{3}}\right) \cdot \frac {1}{q^d}.\\
  \end{align*}

  Then there exists $d'_0$ such that for all $d\ge d'_0$
  \begin{equation}
    \label{eq:majSd} \tag{$*$}
    \frac{R(d)}{N(1) \cdot N(d-1)} \le 1+ \frac 1d.
  \end{equation}

  \emph{Lower bound.}

  Among reducible polynomials of degree $d$ there are polynomials of type
  $f_1\cdot f_2$ where $f_1$ is an irreducible polynomials of degree $1$ and
  $f_2$ is irreducible of degree $d-1$.  This corresponds to the partition
  $[1,d-1] \in \PP(2,d)$.  The number of polynomials corresponding to the
  partition $[1,d-1]$ is equal to $I(1)\otimes I(d-1)= N(1) \cdot I(d-1)$.

  Then for $d \ge d_0'$:
  \begin{align*}
    R(d)
    &\ge I(1)\otimes I(d-1)\\
    &= N(1) \cdot I(d-1) \\
    &= N(1) \big(N(d-1)-R(d-1)\big) \\
    &\ge N(1) \cdot \big( N(d-1) - N(1)\cdot \left(1+\frac {1}{d-1}\right) \cdot N(d-2)\big)  \qquad \text{ by (\ref{eq:majSd})} \\
    &= N(1) \cdot N(d-1)\cdot  \left( 1-N(1)\cdot\left(1+\frac
        {1}{d-1}\right)\cdot \frac{N(d-2)}{N(d-1)} \right) \\
    &\ge N(1) \cdot N(d-1) \cdot \left( 1-N(1)\cdot \left(1+\frac {1}{d-1}\right)
      \cdot \frac{1}{q^{d}} \right) \\
    &\ge N(1) \cdot N(d-1) \cdot \left( 1-\frac {1}{d}\right) \qquad \text{ $d\ge
      d_0$, for a $d_0 \ge d'_0$} \\
  \end{align*}
\end{proof}

\begin{proof}[Proof of the main theorem]
  We are now ready to prove the theorem of the introduction:
  \begin{align*}
    1-\frac{I(d)}{N(d)}
    &= \frac{N(d)-I(d)}{N(d)} =   \frac{R(d)}{N(d)} \\
    &= \frac{R(d)}{N(d-1)} \cdot \frac{N(d-1)}{N(d)} \\
    &\sim  N(1) \cdot \frac {1}{q^{d+1}} = q(q+1) \cdot \frac{1}{q^{d+1}} \\
    &\sim \frac{q+1}{q^d}.
  \end{align*}
  The first equivalence is obtained using Lemma \ref{lem:Sd} and Lemma
  \ref{lem:Ndquot}.
\end{proof}

\section{More variables}

It is not hard to extend these results to polynomials in
$\Ff_q[x_1,\ldots,x_m]$, with $m\ge 2$.  In fact only results of section
\ref{sec:Nd} have to be generalized, while the rest of the paper is still
valid.

First of all the number $N_m(d)$ of normalized polynomials of degree exactly $d$ in
$\Ff_q[x_1,\ldots,x_m]$ involves some more advanced combinatorics:
$$N_m(d) = \frac{1}{q-1} \cdot \left( q^{\binom{m+d-1}{m-1}} -1 \right) \cdot
q^{\binom{m+d-1}{m}}.$$
We get that $N_m(1)= \frac{q^{m+1}-q}{q-1}$.
Let $I_m(d)$ be the number of normalized irreducible polynomials in
$\Ff_q[x_1,\ldots,x_m]$ of degree exactly $d$ whose asymptotic behaviour
of $I_m(d)$ as $d\to +\infty$ is describe by the next result.
\begin{theorem}
$$1-\frac{I_m(d)}{N_m(d)} \sim N_m(1) \cdot  \frac{N_m(d-1)}{N_m(d)} \sim \frac{q^{m+1}-q}{q-1}
 \cdot \frac {1}{q^{\binom{m+d-1}{m-1}}}.$$
\end{theorem}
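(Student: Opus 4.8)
The plan is to mirror the two-variable argument essentially verbatim, observing that the only ingredients of the $m=2$ proof that referred to the specific shape of $N(d)$ were the three structural facts collected in Section~\ref{sec:Nd}: the asymptotic quotient $N(d)/N(d+1)$ (Lemma~\ref{lem:Ndquot}), the submultiplicativity bounds $N(a)\cdot N(b)\le q^c\cdot N(a+b-j)$ (Lemma~\ref{lem:majmul}), and the fact that $N(1)$ is a constant. So the first step is to re-establish these three facts for $N_m(d)$. For the quotient I would compute $N_m(d)/N_m(d+1)$ directly from the stated formula; since $\binom{m+d}{m}-\binom{m+d-1}{m}=\binom{m+d-1}{m-1}$, the dominant power in the ratio is $q^{-\binom{m+d-1}{m-1}}$, giving
\begin{equation*}
\frac{N_m(d-1)}{N_m(d)}\sim \frac{1}{q^{\binom{m+d-1}{m-1}}}.
\end{equation*}
This already pins down the right-hand side of the claimed equivalence once the left-hand equivalence is proved.

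Next I would re-prove an analogue of Lemma~\ref{lem:majmul}. The key auxiliary inequality in the two-variable case was that $M(d)=q^{d+1}-1$ satisfies $M(a)M(b)\le qM(a+b)$; here the analogous factor is $M_m(d)=q^{\binom{m+d-1}{m-1}}-1$, and I would check that $M_m(a)M_m(b)\le q\cdot M_m(a+b)$ still holds, because $\binom{m+a-1}{m-1}+\binom{m+b-1}{m-1}\le \binom{m+a+b-1}{m-1}+1$ for $a,b\ge 1$ (superadditivity of the binomial coefficient, with the $+1$ absorbing the boundary case). The exponent bookkeeping for the $q$-power factor $q^{\binom{m+d-1}{m}}$ goes through in the same way, since the shift $\binom{m+a-1}{m}+\binom{m+b-1}{m}-\binom{m+a+b-1}{m}$ is negative and grows, providing the decay that dominated the cross terms before. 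From this one extracts the three bounds of Lemma~\ref{lem:majmul} (possibly with a different but still \emph{constant} exponent $c$ depending only on $m$ and $q$).

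With these analogues in hand, Lemmas~\ref{lem:majoration}, \ref{lem:Sd} and the proof of the main theorem carry over word for word: the partition $[1,d-1]$ again dominates, contributing $N_m(1)\cdot I_m(d-1)$, all other partitions are bounded by $q^c\cdot N_m(d-2)$, and the Hardy--Ramanujan bound $P(d)<\exp(\pi\sqrt{2d/3})$ is subexponential and so is crushed by the geometric factor $N_m(d-2)/N_m(d-1)\sim q^{-\binom{m+d-1}{m-1}}$. This yields $R_m(d)\sim N_m(1)\cdot N_m(d-1)$, and then the same telescoping computation as in the main theorem gives
\begin{equation*}
1-\frac{I_m(d)}{N_m(d)}=\frac{R_m(d)}{N_m(d)}\sim N_m(1)\cdot\frac{N_m(d-1)}{N_m(d)}\sim \frac{q^{m+1}-q}{q-1}\cdot\frac{1}{q^{\binom{m+d-1}{m-1}}},
\end{equation*}
using $N_m(1)=\frac{q^{m+1}-q}{q-1}$ and the quotient asymptotic from the first step.

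The main obstacle I anticipate is purely the combinatorial verification that the binomial exponents behave superadditively in the right direction for \emph{all} admissible $a,b$, including the small cases $a,b\in\{1,2\}$ that in Lemma~\ref{lem:majmul} required separate treatment and imposed the hypotheses $a,b\ge 3$. For general $m$ the constant $q^c$ and the threshold on $a,b$ will shift, so some care is needed to confirm that the resulting error term is still $o(1)$ after dividing by $N_m(1)\cdot N_m(d-1)$; but since the decay rate $q^{-\binom{m+d-1}{m-1}}$ is at least as fast as in the two-variable case (the binomial coefficient grows in $d$), the subexponential partition count is controlled with room to spare, and no genuinely new idea beyond the exponent arithmetic should be required.
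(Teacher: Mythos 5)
Your proposal is correct and follows exactly the route the paper intends: it generalizes the counting lemmas of Section~\ref{sec:Nd} (the quotient asymptotic $N_m(d-1)/N_m(d)\sim q^{-\binom{m+d-1}{m-1}}$ via Pascal's identity, and the submultiplicativity bounds via convexity of $a\mapsto\binom{m+a-1}{m-1}$), and then observes that Lemmas~\ref{lem:majoration}, \ref{lem:Sd} and the main theorem's proof carry over unchanged, with the subexponential partition bound crushed by the at-least-geometric decay of $N_m(d-2)/N_m(d-1)$. This is precisely the paper's argument (which it states without detail), and your verification of the exponent inequalities supplies the details the paper omits.
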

For example in $\Ff_2[x,y,z]$ the number $I_3(d)$ of irreducible polynomials
verifies:
$$1-\frac{I_3(d)}{N_3(d)} \sim \frac{14}{2^{\frac{(d+1)(d+2)}{2}}}.$$


\end{document}